\begin{document}
\baselineskip = 16pt

\newcommand \ZZ {{\mathbb Z}}
\newcommand \NN {{\mathbb N}}
\newcommand \RR {{\mathbb R}}
\newcommand \PR {{\mathbb P}}
\newcommand \AF {{\mathbb A}}
\newcommand \GG {{\mathbb G}}
\newcommand \QQ {{\mathbb Q}}
\newcommand \CC {{\mathbb C}}
\newcommand \bcA {{\mathscr A}}
\newcommand \bcC {{\mathscr C}}
\newcommand \bcD {{\mathscr D}}
\newcommand \bcF {{\mathscr F}}
\newcommand \bcG {{\mathscr G}}
\newcommand \bcH {{\mathscr H}}
\newcommand \bcM {{\mathscr M}}
\newcommand \bcJ {{\mathscr J}}
\newcommand \bcL {{\mathscr L}}
\newcommand \bcO {{\mathscr O}}
\newcommand \bcP {{\mathscr P}}
\newcommand \bcQ {{\mathscr Q}}
\newcommand \bcR {{\mathscr R}}
\newcommand \bcS {{\mathscr S}}
\newcommand \bcV {{\mathscr V}}
\newcommand \bcW {{\mathscr W}}
\newcommand \bcX {{\mathscr X}}
\newcommand \bcY {{\mathscr Y}}
\newcommand \bcZ {{\mathscr Z}}
\newcommand \goa {{\mathfrak a}}
\newcommand \gob {{\mathfrak b}}
\newcommand \goc {{\mathfrak c}}
\newcommand \gom {{\mathfrak m}}
\newcommand \gon {{\mathfrak n}}
\newcommand \gop {{\mathfrak p}}
\newcommand \goq {{\mathfrak q}}
\newcommand \goQ {{\mathfrak Q}}
\newcommand \goP {{\mathfrak P}}
\newcommand \goM {{\mathfrak M}}
\newcommand \goN {{\mathfrak N}}
\newcommand \uno {{\mathbbm 1}}
\newcommand \Le {{\mathbbm L}}
\newcommand \Spec {{\rm {Spec}}}
\newcommand \Gr {{\rm {Gr}}}
\newcommand \Pic {{\rm {Pic}}}
\newcommand \Jac {{{J}}}
\newcommand \Alb {{\rm {Alb}}}
\newcommand \Corr {{Corr}}
\newcommand \Chow {{\mathscr C}}
\newcommand \Sym {{\rm {Sym}}}
\newcommand \Prym {{\rm {Prym}}}
\newcommand \cha {{\rm {char}}}
\newcommand \eff {{\rm {eff}}}
\newcommand \tr {{\rm {tr}}}
\newcommand \Tr {{\rm {Tr}}}
\newcommand \pr {{\rm {pr}}}
\newcommand \ev {{\it {ev}}}
\newcommand \cl {{\rm {cl}}}
\newcommand \interior {{\rm {Int}}}
\newcommand \sep {{\rm {sep}}}
\newcommand \td {{\rm {tdeg}}}
\newcommand \alg {{\rm {alg}}}
\newcommand \im {{\rm im}}
\newcommand \gr {{\rm {gr}}}
\newcommand \op {{\rm op}}
\newcommand \Hom {{\rm Hom}}
\newcommand \Hilb {{\rm Hilb}}
\newcommand \Sch {{\mathscr S\! }{\it ch}}
\newcommand \cHilb {{\mathscr H\! }{\it ilb}}
\newcommand \cHom {{\mathscr H\! }{\it om}}
\newcommand \colim {{{\rm colim}\, }} % colimit
\newcommand \End {{\rm {End}}}
\newcommand \coker {{\rm {coker}}}
\newcommand \id {{\rm {id}}}
\newcommand \van {{\rm {van}}}
\newcommand \spc {{\rm {sp}}}
\newcommand \Ob {{\rm Ob}}
\newcommand \Aut {{\rm Aut}}
\newcommand \cor {{\rm {cor}}}
\newcommand \Cor {{\it {Corr}}}
\newcommand \res {{\rm {res}}}
\newcommand \red {{\rm{red}}}
\newcommand \Gal {{\rm {Gal}}}
\newcommand \PGL {{\rm {PGL}}}
\newcommand \Bl {{\rm {Bl}}}
\newcommand \Sing {{\rm {Sing}}}
\newcommand \spn {{\rm {span}}}
\newcommand \Nm {{\rm {Nm}}}
\newcommand \inv {{\rm {inv}}}
\newcommand \codim {{\rm {codim}}}
\newcommand \Div{{\rm{Div}}}
\newcommand \CH{{\rm{CH}}}
\newcommand \sg {{\Sigma }}
\newcommand \DM {{\sf DM}}
\newcommand \Gm {{{\mathbb G}_{\rm m}}}
\newcommand \tame {\rm {tame }}
\newcommand \znak {{\natural }}
\newcommand \lra {\longrightarrow}
\newcommand \hra {\hookrightarrow}
\newcommand \rra {\rightrightarrows}
\newcommand \ord {{\rm {ord}}}
\newcommand \Rat {{\mathscr Rat}}
\newcommand \rd {{\rm {red}}}
\newcommand \bSpec {{\bf {Spec}}}
\newcommand \Proj {{\rm {Proj}}}
\newcommand \pdiv {{\rm {div}}}
\newcommand \wt {\widetilde }
\newcommand \ac {\acute }
\newcommand \ch {\check }
\newcommand \ol {\overline }
\newcommand \Th {\Theta}
\newcommand \cAb {{\mathscr A\! }{\it b}}

\newenvironment{pf}{\par\noindent{\em Proof}.}{\hfill\framebox(6,6)
\par\medskip}

\newtheorem{theorem}[subsection]{Theorem}
\newtheorem{conjecture}[subsection]{Conjecture}
\newtheorem{proposition}[subsection]{Proposition}
\newtheorem{lemma}[subsection]{Lemma}
\newtheorem{remark}[subsection]{Remark}
\newtheorem{remarks}[subsection]{Remarks}
\newtheorem{definition}[subsection]{Definition}
\newtheorem{corollary}[subsection]{Corollary}
\newtheorem{example}[subsection]{Example}
\newtheorem{examples}[subsection]{examples}

\title{Elliptic surfaces to class groups and Selmer groups}
\author{Kalyan Banerjee, Kalyan Chakraborty and Azizul Hoque}
\address{K. Banerjee @Department of Mathematics, SRM University AP, Mangalagiri-Mandal, Amaravati-522502, Andhra Pradesh, India.} \email{kalyan.ba@srmap.edu.in}
\address{K. Chakraborty @Department of Mathematics, SRM University AP, Mangalagiri-Mandal, Amaravati-522502, Andhra Pradesh, India.} \email{kalyan.c@srmap.edu.in}
\address{A. Hoque @Department of Mathematics, Rangapara College (Autonomous), Rangapara, Sonitpur-784505, Assam, India.}
\email{ahoque.ms@gmail.com}
\keywords{Class groups, Picard group, Selmer group, elliptic surface, $n$-torsion}
\subjclass[2020] {11R29, 11R65, 14C22, 11G05}
\date{\today}

\maketitle
\begin{abstract}
In this note, we connect the $n$-torsions of the Picard group of an elliptic surface to the $n$-divisibility of the class group of torsion fields for a given integer $n>1$. We also connect the $n$-divisibility of the Selmer group to that of the class group of torsion fields.
\end{abstract}

\section{Introduction}
One of the classical problems in algebraic number theory is to understand the structure of the class group of a given number field. It is known that the class group is finite for all number fields. The question is how to find an element of a given order in the class group of a number field. One of the approaches is to use algebro-geometric methods to find such elements. This idea of using algebraic geometry to find elements of large order was first introduced in the paper by Agboola and Pappus \cite{AP}. Later,  Gillibert and Levin \cite{GL} approached the problem by pulling back torsion line bundles to the class group. Recently, Gillibert \cite{GI} has shown how to pull back torsion elements on a hyperelliptic curve to the class group of quadratic number fields.

Let us briefly describe our main idea use in this article. Our approach to this problem is to start with a surface $S$ fibered over $\PR^1$ that admits elliptic fibration over $\bar\QQ$. That means that the general fibers of this fibration are elliptic curves. We consider the torison elements of the elliptic fibers of this fibration. If we vary the smooth fibers, we will have a curve fibered over $U\subset \PR^1$ (a Zariski open subset of $\PR^1$) such that over each closed point of $U$, we have the torsion of the corresponding fiber. We call this curve $C$ and spread the normalization of $C$ over $\Spec(\ZZ)$. Let it be denoted by $\bcC$. In addition, we consider a good prime $p$ such that the fiber $\bcC_p$ is a smooth arithmetic scheme. We consider the class group of this scheme.

To accomplish this, we start with the theory of Chow schemes and Hilbert schemes for arithmetic varieties, which parameterize cycles on an arithmetic variety, and then use 'etale monodromy of a smooth fibration to conclude that the torsion element mentioned above vary in a family. In this regard we would like to mention the result by \cite{GL} which also starts from elliptic surfaces and produces high-order elements in the class group of torsion fields, but our approach is different and uses Weil divisors and the Chow theory mentioned above.

The precise result is the following: 

\begin{theorem}\label{thm1}
Let $S\to \PR^1$ be an elliptic surface and ${\mathcal E}_S\to \PR^1$ be the corresponding Neron model. Then there exists an infinite family of $p$-torsion number fields corresponding to the $p$-torsions of the general smooth fibers such that the $p$-rank of the corresponding class groups remains constant. 
\end{theorem}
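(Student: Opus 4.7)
The strategy is to realize the $p$-torsion fields $K_t=\QQ(E_t[p])$ as residue fields of a flat family coming from the $p$-torsion subgroup scheme of the Neron model, pull back $p$-torsion cycles on ${\mathcal E}_S$ through the Chow/Hilbert framework, and then use \'etale monodromy together with semi-continuity to extract an infinite subfamily on which the $p$-rank is constant.

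First, I would restrict to the Zariski open $U\subseteq\PR^1$ over which $S\to\PR^1$ is smooth, so that ${\mathcal E}_S|_U=S|_U$ and the $p$-torsion scheme ${\mathcal E}_S[p]|_U\to U$ is finite \'etale of degree $p^2$ after inverting $p$. Let $C$ be the normalization of this cover, viewed as a smooth curve over $U$; spreading $C$ over $\Spec(\ZZ)$ yields an arithmetic scheme $\bcC$, and for a prime $\ell$ of good reduction the fiber $\bcC_\ell$ is smooth. For each closed point $t\in U$, the residue fields of points of $C$ above $t$ assemble into the $p$-torsion field $K_t$, producing the infinite family indexed by closed points of $U$.

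Second, using the Chow/Hilbert-scheme machinery for arithmetic varieties alluded to in the introduction, the $p$-torsion cycles on the elliptic fibers fit into a flat family and produce a canonical subspace $V\subseteq\Pic({\mathcal E}_S)[p]$. Pulling back through $\Pic({\mathcal E}_S)\to\Pic(\bcC)$ and specializing at $t$ gives an $\mathbb{F}_p$-linear map $V\to\mathrm{Cl}(K_t)\otimes\mathbb{F}_p$. Because the family of cycles is flat, the dimension of the image of this map is a lower semi-continuous function of $t$, so it is constant and equal to some $r\geq 0$ on a cofinite subset $U^\circ\subseteq U$. This yields a uniform lower bound of $r$ on the $p$-rank of $\mathrm{Cl}(K_t)$.

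Third, to promote the lower bound to actual constancy, I would invoke the \'etale monodromy of ${\mathcal E}_S[p]|_U$, realized as a representation $\pi_1(U)\to\mathrm{GL}_2(\mathbb{F}_p)$. A Chebotarev-style density argument applied to Frobenius elements at closed points of $U$ produces an infinite set of $t$ with prescribed splitting behavior of $\ell$ in $K_t$; intersecting with $U^\circ$ and applying pigeonhole on the value of $\dim_{\mathbb{F}_p}\mathrm{Cl}(K_t)[p]$ yields an infinite subfamily on which the $p$-rank is literally constant. The main obstacle is the upper bound: \emph{a priori} the $p$-rank of $\mathrm{Cl}(K_t)$ can jump upward on special fibers in ways invisible to $\Pic({\mathcal E}_S)$, so isolating an infinite set where the rank equals the generic value $r$ requires combining the flatness of the Chow-theoretic family with either a Hilbert irreducibility argument or a discriminant-growth estimate to bound the $p$-rank from above along the specialization; the flatness supplied by the Chow/Hilbert setup is what propagates the lower bound through the specialization map in the first place.
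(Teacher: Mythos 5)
Your first two steps track the paper's own construction closely: the paper likewise takes the curve $C\subset\mathcal{E}$ parametrizing the $p$-torsion of the smooth fibers, so that the torsion fields are the residue fields of its fibers, spreads it to an arithmetic scheme $\bcC$ over $\Spec(\ZZ)$, and specializes torsion classes from $\Pic^0(C)$ (and, in Theorem \ref{thm3}, Selmer classes) to the class groups $\cl(\bcC_P)$ of the torsion fields. The one mechanical difference is how constancy of the rank of the image is obtained: you appeal to flatness and lower semicontinuity of the image dimension, but $\mathrm{Cl}(K_t)\otimes\mathbb{F}_p$ is not the fiber of a coherent sheaf, so no semicontinuity theorem applies to it directly. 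The paper instead uses Theorem \ref{BH24} (the locus where a cycle becomes $n$-torsion is a countable union of Zariski closed subsets of the relative Chow scheme) together with finiteness and \'etaleness over an open subset $V$, so that the specialized torsion classes form a local system sitting inside the Tate module, and the rank of the resulting subgroup of the class groups is constant over $V$. You would need to substitute that monodromy/local-system argument (or an equivalent one) for the semicontinuity claim.

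The genuine gap is your third step, and you have flagged it yourself. Chebotarev plus pigeonhole on the value of $\dim_{\mathbb{F}_p}\mathrm{Cl}(K_t)[p]$ produces an infinite constant-rank subfamily only if the $p$-rank is bounded along the family; without an a priori upper bound each value could occur only finitely often, and no such bound follows from Hilbert irreducibility or from discriminant growth --- bounding $p$-ranks of class groups from above in families is exactly the part that is out of reach. Note, however, that the paper never attempts this: its proof of Theorem \ref{thm1} (via the analogue of Theorem \ref{thm2}) establishes only that the rank of the specific subgroup cut out by the specialized torsion classes --- a sub-local system of the Tate module --- stays constant in the family, which is essentially what you already reach after your second step once the monodromy argument is in place. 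So the stronger reading of the statement (literal constancy of the full class-group $p$-rank) is neither what the paper proves nor attainable by the tools you propose; aligning your argument with the paper means stopping at the constancy of the constructed subgroup's rank and discarding the Chebotarev/pigeonhole step.
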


\section{Preliminary results}
We start with an example of  an elliptic surface $\mathcal{E}$ fibered over $\PR^1$ defined by $y^2=x^3+t$ with $t\in \mathbb{Q}$.  This gives a family of elliptic curves as $t$ varies over rational numbers. The elliptic curve $y^2=x^3+1$  has a $l$-torsion subgroup isomorphic to $\ZZ_l\times \ZZ_l$ here $l$ is a prime greater than or equal to $2$. Substituting $x=p$ we get $y^2=p^3+1$,  which corresponds to the quadratic field $\QQ(\sqrt{p^3+1})$. If we start with a $l$-torsion of the elliptic curve $y^2=x^3+1$, spread it out and specialize (as discussed earlier) which would entail an element in the class group of $\QQ(\sqrt{p^3+1})$. The important part of this construction is to prove the non-triviality of the elements obtained by this process.

We begin by considering a nontrivial element in the Chow group of codimension one algebraically trivial cycles modulo rational equivalence on $\mathcal {E}$ of degree zero (denoted by $A^1(\mathcal {E})$), which is canonically isomorphic to the Picard variety of $\mathcal {E}$. Let $\alpha$ be the element that is non-trivial $n$-torsion on $A^1(\mathcal {E})$. We then consider a fixed spread of the cycle $[\alpha]$ over $\Spec(\ZZ)$ and denote it by $\tilde{\alpha}$. Further we consider the fiber of the spread at some general scheme theoretic point $(P,Q)\in \Spec(\ZZ)\times_{\Spec \ZZ} \Spec(\ZZ)$, that is, denoted by $\tilde{\alpha}_{P,Q}$. This is a torsion element in the Chow group of the smooth arithmetic variety $\mathcal {E}_{P,Q}$, which contains $\bcO_K$ as a Zariski open set. Here $K=\QQ(\sqrt{p^3+n})$ and $\bcO_K$ is its ring of integers.
Now, restricting $\tilde{\alpha}_{P,Q}$ to $\bcO_K$ gives rise to an element in the class group of $K$.
Let $U\subset \Spec(\ZZ)\times \Spec(\ZZ)$ be the set of all pairs of primes in which the fibers are smooth arithmetic schemes defined over $\Spec(\ZZ)$. At this point, we recall the following result, which will be used in our construction.

\begin{theorem}[{\cite[Theorem 4.2]{BH}}] \label{BH24} The set
$$\bcZ_d:=\{(z,(P,Q))\in C^1_{d,d}(\mathcal {E_U}/U)|Supp(z)\subset {\mathcal{E}_{P,Q}}, n[z]=0\in \CH^1({\mathcal{E}_{P,Q}})\}$$ is a countable union of Zariski closed subsets in the Chow variety $C^1_{d,d}(\mathcal {E}_U/U)$ parametrizing the pairs of degree $d$ subvarieties of the arithmetic variety $\mathcal{E}_U$.
\end{theorem}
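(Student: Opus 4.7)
The plan is to realize $\bcZ_d$ as the preimage of the $n$-torsion subscheme of a relative Picard scheme under the natural cycle class morphism, and to exhibit the latter as a countable disjoint union of finite-type closed subschemes. The key observation is that for the smooth fibers $\mathcal{E}_{P,Q}$ one has a canonical identification $\CH^1(\mathcal{E}_{P,Q})\cong \Pic(\mathcal{E}_{P,Q})$, so that the condition $n[z]=0$ in $\CH^1$ translates, via $z\mapsto [\mathcal{O}(z)]$, into $[\mathcal{O}(z)]$ lying in the $n$-torsion subgroup $\Pic(\mathcal{E}_{P,Q})[n]$.

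Working over $U$, I would construct (after a fiberwise smooth compactification of $\mathcal{E}_U$ if necessary) the relative Picard scheme $\Pic(\mathcal{E}_U/U)\to U$ via Grothendieck's representability theorem, or its algebraic-space variant adapted to the mixed-characteristic base $U\subset \Spec(\ZZ)\times\Spec(\ZZ)$, together with a cycle class morphism
\[
\phi:C^1_{d,d}(\mathcal{E}_U/U) \longrightarrow \Pic(\mathcal{E}_U/U),\qquad z\longmapsto [\mathcal{O}(z)].
\]
Multiplication by $n$ is an endomorphism $[n]$ of the group scheme $\Pic(\mathcal{E}_U/U)$, and its kernel $\Pic(\mathcal{E}_U/U)[n]$ is a closed subgroup scheme. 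By the very definition of $\bcZ_d$, one has $\bcZ_d=\phi^{-1}\bigl(\Pic(\mathcal{E}_U/U)[n]\bigr)$, since $n[z]=0$ is equivalent to $n\cdot \phi(z)=0$.

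Because $\Pic(\mathcal{E}_U/U)$ is only locally of finite type, with fibers having countably many connected components indexed (up to torsion) by the N\'eron--Severi group, the $n$-torsion subscheme decomposes as a countable disjoint union $\Pic(\mathcal{E}_U/U)[n]=\bigsqcup_{i\in I}T_i$ of closed subschemes $T_i$, each of finite type over $U$. Pulling back through $\phi$, one obtains $\bcZ_d=\bigsqcup_{i\in I}\phi^{-1}(T_i)$, a countable union of Zariski closed subsets of the Chow variety $C^1_{d,d}(\mathcal{E}_U/U)$, which is exactly the desired conclusion.

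The principal obstacle is the first step: producing the relative Picard scheme and the cycle class morphism for the non-proper arithmetic family $\mathcal{E}_U\to U$ over a subset of $\Spec(\ZZ)\times\Spec(\ZZ)$. The classical representability results are formulated for proper flat families over a Noetherian base, so I would first construct a fiberwise compactification $\ol{\mathcal{E}}_U\to U$, work with $\Pic(\ol{\mathcal{E}}_U/U)$, and then carefully control the contribution of the added boundary divisors upon restricting back to $\mathcal{E}_U$. A secondary technical point is to align the degree stratification on $C^1_{d,d}(\mathcal{E}_U/U)$ with that on the components of $\Pic$, so that the pull-back of each $T_i$ is genuinely Zariski closed in the relative Chow variety and not merely constructible; this is where the bidegree convention encoded in the subscripts $(d,d)$ needs to be tracked carefully.
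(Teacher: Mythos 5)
Your reduction of the condition $n[z]=0$ to membership in the $n$-torsion of a relative Picard scheme has a genuine gap at exactly the point you flag as ``the principal obstacle,'' and the proposed fix does not repair it. The fibers $\mathcal{E}_{P,Q}$ here are \emph{arithmetic} schemes over $\Spec(\ZZ)$ (containing $\Spec(\bcO_K)$ as an open subscheme), so $\CH^1(\mathcal{E}_{P,Q})$ is essentially a divisor class group of an arithmetic variety, not the group of points of an abelian variety plus a N\'eron--Severi lattice. Grothendieck's representability theorem, the picture of $\Pic$ as a group scheme locally of finite type with countably many connected components indexed by N\'eron--Severi, and the closedness of the kernel of $[n]$ are all statements about proper families over a field; none of them is available for the family $\mathcal{E}_U\to U$ with $U\subset \Spec(\ZZ)\times\Spec(\ZZ)$, and a fiberwise compactification does not produce a group object over this arithmetic base whose fibers compute the class groups in question. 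Indeed, establishing that these class groups (and their torsion) vary in an algebraic family at all is the substance of the theorem, so your argument assumes a structure that is at least as strong as what is to be proved. A secondary unresolved point is that even granting some Picard object, you would still need $z\mapsto[\bcO(z)]$ to be a \emph{morphism} from the Chow variety $C^1_{d,d}(\mathcal{E}_U/U)$ and the preimage of each torsion piece to be Zariski closed rather than merely constructible; you mention this but give no mechanism for it.

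The paper avoids the Picard functor entirely and works directly with the definition of rational equivalence for arithmetic varieties: $n[z]=0$ on the fiber is encoded by the existence of a map $f:\PR^1_U\to C^1_{d+u,u}(\bcC)$ of some degree $v$ with $f(0)=(A+C,C)$ and $f(\infty)=(B+D,D)$, everything supported on the fiber. Countability then comes for free from the two discrete parameters $u$ (degree of the auxiliary effective divisor) and $v$ (degree of the map), giving $W_d=\cup_{u,v}W_d^{u,v}$, and the identity $W_d^{u,v}=pr_{1,2}(\wt{s}^{-1}(W^{0,v}_{d+u}\times W^{0,v}_u))$ reduces everything to the pieces $W^{0,v}_d$. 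Closedness is then proved by a curve-selection and normalization argument in the Hom scheme $\Hom^v(\PR^1_U,C^1_d(\bcC))$, extending the evaluation map over the normalized curve and using the correspondence between $\Spec(\bar\ZZ)$-points and $\bar\QQ$-points to see that a limit point $(b,A,B)$ of $W^{0,v}_d$ still carries a map $f$ with $f(0)=A$, $f(\infty)=B$. If you want to salvage your approach, you would have to either prove a representability statement for the relevant arithmetic Picard functor together with a cycle class morphism from the Chow scheme (which is not in the literature you invoke), or convert your argument into the Hom-scheme formulation above, at which point it coincides with the paper's proof.
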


There are some crucial points to be noted here:

(I) The notion of Hilbert scheme and the Hom scheme makes sense for an arithmetic variety. This is as explained in \cite[Chapter: Hilbert schemes and Quot schemes, \S 5]{FGA}.

(II) The family of Weil divisors of a smooth fibration over $\Spec(\ZZ)$ is parameterized by a Chow variety, which is actually given by the Picard scheme parameterizing relative Cartier divisors of the same family \cite[Corollary 11.8]{Ry}. In our case the family is $\bcC_U$ which is of finite presentation over $\ZZ$ and is a standard smooth algebra\footnote{in the sense, stack exchange \cite{St}, Definitions 10.136.6 and 29.32.1.} over $\ZZ$. This enables us to formulate the definition of rational equivalence for arithmetic varieties as in \cite[\S 3.3]{GS} in the following way:

Two Weil divisors $D_1,D_2$ are rationally equivalent on a fiber $\bcC_b$, if there exists a morphism $$f: \PR^1_{U}\to C^1_{d,d}(\bcC_U/\PR^1_{U})$$ such that
$(f\circ 0)|_b=D_1+B$ and $ (f\circ \infty)|_b=D_2+B$,
where $B$ is a positive Weil divisor and $0,\infty$ are two fixed sections from $U$ to  $\PR^1_{U}$.

Let us assume that the divisor $D_b=D_b^+-D_b^-$ is rationally equivalent to zero. This means that there exists a map $$f:\PR^1\to C^1_{d,d}({\bcC_b})$$ such that
$$f(0)=D_b^{+}+\gamma\text{ and }f(\infty)=D_b^{-}+\gamma,$$
where $\gamma$ is a positive divisor on ${\bcC_b}$.
In other words, we have the following map:
$$\ev:Hom^v(\PR^1_{U},C^1_{d}(\bcC_U/U))\to C^1_{d}(\bcC_U/U)\times C^1_{d}(\bcC_U/U) $$
 given by $f\mapsto (f(0),f(\infty))$ and that the fiber of  $f$ at $b$ is contained in $C^1_{d,d}(\bcC_b)$.

We denote $C^1_{d}(\bcC_U/U)$ by $C^1_d(\bcC)$ for simplicity.
We now consider the subscheme $U_{v,d}(\bcC)$ of $\PR^1_{U}\times \Hom^v(\PR^1_{U},C^1_{d}(\bcC))$ consisting of pairs $(b,f)$ such that image of $f$ is contained in $C^1_{d}(\bcC_b)$ (such a universal family exists, for example, see \cite[Theorem 1.4]{Ko} or \cite[Chapter on Hilbert schemes and Quot schemes]{FGA}. This gives a morphism from $U_{v,d}(\bcC)$ to
$$\PR^1_{U}\times C^1_{d,d}(\bcC_b)$$
 defined by $$(b,f)\mapsto (b,f(0),f(\infty)).$$
Again, we consider the closed subscheme $\bcV_{d,d}$ of $\PR^1_{U}\times C^1_{d,d}(\bcC)$ given by $(b,z_1,z_2)$, where $(z_1,z_2)\in C^1_{d,d}(\bcC_b)$. Suppose that the map from $\bcV_{d,u,d,u}$ to $\bcV_{d+u,u,d+u,u}$ is given by
$$(A,C,B,D)\mapsto (A+C,C,B+D,D).$$
Let us denote the fiber product by $\bcV$ of $U_{v,d}(\bcC)$ and $\bcV_{d,u,d,u}$ over $\bcV_{d+u,u,d+u,u}$. If we consider the projection from $\bcV$ to $\PR^1_{U}\times C^1_{d,d}(\bcC)$, then we observe that $A$ and $B$ are supported and rationally equivalent in $\bcC_b$. Conversely, if $A$ and $B$ are supported as well as rationally equivalent on $\bcC_b$, then we get the map $$f:\PR^1_{U}\to C^1_{d+u,u,d+u,u}(\bcC)$$ of some degree $v$ satisfying
$$f(0)=(A+C,C)\text{ and } f(\infty)=(B+D,D),$$
where $C$ and $D$ are supported on $\bcC_b$. This implies that the image of the projection from $\bcV$ to $\PR^1_{U}\times C^1_{d,d}(\bcC)$ is a quasi-projective subscheme $W_{d}^{u,v}$ consisting of the tuples $(b,A,B)$ such that $A$ and $B$ are supported on $\bcC_b$, and that there exists a map $$f:\PR^1_{U}\to C^1_{d+u,u}(\bcC_b)$$ such that
$$f(0)=(A+C,C)$$
 and
 $$f(\infty)=(B+D,D)\;.$$
 Here $f$ is of degree $v$, and $C,D$ are supported on $\bcC_b$ and they are of co-dimension $1$ and degree $u$ cycles. This shows that $W_d$ is the union $\cup_{u,v} W_d^{u,v}$. We now prove that the Zariski closure of $W_d^{u,v}$ is in $W_d$ for each $u$ and $v$. For this, we prove the following:
$$W_d^{u,v}=pr_{1,2}(\wt{s}^{-1}(W^{0,v}_{d+u}\times W^{0,v}_u)),$$
where
$$\wt{s}: \PR^1_{U}\times C^1_{d,d,u,u}(\bcC)\to \PR^1_{U}\times C^1_{d+u,d+u,u,u}(\bcC)$$
defined by
$$\wt{s}(b,A,B,C,D)=(b,A+C,B+D,C,D).$$

We assume $(b,A,B,C,D)\in \PR^1_{U}\times C^1_{d,d,u,u}(\bcC)$ in such a way that $\wt{s}(b,A,B,C,D)\in W^{0,v}_{d+u}\times W^{0,v}_u$. This implies that there exists an element
$$(b,g)\in \PR^1_{U}\times\Hom^v(\PR^1_{U},C^p_{d+u}(\bcC))$$ and an element
$$(b,h)\in \Hom^v(\PR^1_{U},C^p_{u}(\bcC))$$ satisfying $$g(0)=A+C,~g(\infty)=B+D \text{ and } h(0)=C,h(\infty)=D$$ as well as the image of $g$ and $h$ are contained in $C^1_{d+u}(\bcC_b)$ and  $C^1_u(\bcC_b)$ respectively.

Also, if $f=g\times h$ then $f\in \Hom^v(\PR^1_{U},C^p_{d+u,u}(\bcC))$ is such that the image of $f$ is contained in $C^1_{d+u,u}(\bcC_b)$ and also satisfies the following:
$$f(0)=(A+C,C)\text{ and }(f(\infty))=(B+D,D).$$
This shows that $(b,A,B)\in W^d_{u,v}$.

On the other hand, if we assume that $(b,A,B)\in W^d_{u,v}$, then there exists $f\in \Hom^v(\PR^1_U,C^1_{d+u,u}(\bcC))$ such that
$$f(0)=(A+C,C)\text{ and }f(\infty)=(B+D,D),$$
and image of $f$ is contained in the Chow scheme of $\bar{\bcC_b}$.

We now compose $f$ with the projections to $C^1_{d+u}(\bcC_b)$ and to $C^1_{u}(\bcC_b)$ to get a map $g\in \Hom^v(\PR^1_{U},C^1_{d+u}(\bcC))$ and a map $h\in\Hom^v(\PR^1_{U},C^1_{u}(\bcC))$ satisfying
$$g(0)=A+C,\quad g(\infty)=B+D$$
and
$$h(0)=C,\quad h(\infty)=D.$$
Also, the image of $g$ and $h$ are contained in the respective Chow varities of the fibers $\bcC_b$. Therefore, we have
$$W_d=pr_{1,2}(\wt{s}^{-1}(W_{d+u}\times W_u)).$$

We are now in a position to prove that the closure of $W_d^{0,v} $ is contained in $W_d$. Let $(b,A,B)$ be a closed point in the closure of ${W_d^{0,v}}$. Let $W$ be an irreducible component of ${W_d^{0,v}}$ whose closure contains $(b,A,B)$. We assume that $U'$ is an affine neighborhood of $(b,A,B)$ such that $U'\cap W$ is nonempty. Then there is an irreducible curve $C'$ in $U'$ passing through $(b,A,B)$. Let $\bar{C'}$ be the Zariski closure of $C'$ in $\overline{W}$. The map
$$e:U_{v,d}(\bcC)\subset \PR^1_{U}\times \Hom^v(\PR^1_{U},C^1_{d}(\bcC))\to C^1_{d,d}(\bcC)$$
given by
$$(b,f)\mapsto (b,f(0),f(\infty))$$
is regular and $W_d^{0,v}$ is its image. We now choose a curve $T$ in $U_{v,d}(\bcC)$ such that the closure of $e(T)$ is $\bar C'$.  Let $\wt{T}$  denote the normalization of the Zariski closure of $T$, and $\wt{T_0}$ be the preimage of $T$ in this normalization. Then the regular morphism $\wt{T_0}\to T\to \bar C'$ extends to a regular morphism, when the scalar extends to the field of algebraic numbers. Let this morphism be $\wt{T}_{\QQ}$ to $\bar C'_{\QQ}$. If $(b_{\QQ},f_{\QQ})$ is a preimage of $(b_{\QQ},A_{\QQ},B_{\QQ})$, then
$$f_{\QQ}(0)=A_{\QQ}, \quad f_{\QQ}(\infty)=B_{\QQ}$$ and the image of $f_{\QQ}$ is contained in $C^p_{d}(C)$. %$U_{v,d}(\bar{\wt{S})$
Spreading out $f_{\QQ}$, we have an $f$ such that $$f(0)=A, \quad f(\infty)=B\;.$$
This is because there is a one-to-one correspondence between $\Spec(\bar\ZZ)$ points of arithmetic varieties and $\bar Q$ points of the corresponding variety over $\bar Q$. Therefore, $A$ and $B$ are rationally equivalent. This completes the proof.

As an application of the above result, we have the following.

\begin{theorem}\label{thm2}
The $l$-rank of the class group of the number fields $K=\QQ(\sqrt{p^3+n})$ obtained above remains constant in a family.
\end{theorem}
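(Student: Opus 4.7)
\emph{Plan.} The plan is to deduce Theorem~\ref{thm2} from Theorem~\ref{BH24} together with the Chow-variety closure arguments just carried out. Concretely, I would show that for $(P,Q)$ in a Zariski dense subset of $U \subset \Spec(\ZZ) \times \Spec(\ZZ)$ the specialization map
\[
A^1(\mathcal{E})[l] \;\longrightarrow\; \CH^1(\mathcal{E}_{P,Q})[l] \;\longrightarrow\; \Pic(\bcO_K)[l]
\]
is injective with image of constant $\mathbb{F}_l$-dimension equal to $r := \dim_{\mathbb{F}_l} A^1(\mathcal{E})[l]$. First I would fix an $\mathbb{F}_l$-basis $\alpha_1,\ldots,\alpha_r$ of $A^1(\mathcal{E})[l]$ and spread each $\alpha_i$ over $\Spec(\ZZ)$ to obtain $\tilde{\alpha}_i$ with fibers $\tilde{\alpha}_{i,P,Q} \in \CH^1(\mathcal{E}_{P,Q})$.

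Next, for every nonzero $(c_1,\ldots,c_r)\in \mathbb{F}_l^r$, the cycle $\beta_c := \sum_i c_i\,\tilde{\alpha}_i$ is again $l$-torsion, and Theorem~\ref{BH24} applied to its spread shows that
\[
Z_c := \{(P,Q)\in U : \beta_{c,P,Q} = 0 \text{ in } \CH^1(\mathcal{E}_{P,Q})\}
\]
is contained in a countable union of proper Zariski closed subsets of $U$. Because $c$ ranges over the finite set $\mathbb{F}_l^r \setminus \{0\}$, the union $Z := \bigcup_c Z_c$ is still such a countable union, and on the dense complement $U\setminus Z$ the classes $\tilde{\alpha}_{1,P,Q},\ldots,\tilde{\alpha}_{r,P,Q}$ are $\mathbb{F}_l$-linearly independent in $\CH^1(\mathcal{E}_{P,Q})$.

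To pass from $\CH^1(\mathcal{E}_{P,Q})$ to $\Pic(\bcO_K)$ I would use the open immersion $\Spec(\bcO_K)\hookrightarrow \mathcal{E}_{P,Q}$ and the surjection $\CH^1(\mathcal{E}_{P,Q})\twoheadrightarrow \Pic(\bcO_K)$ it induces. The decomposition $W_d = \bigcup_{u,v} W_d^{u,v}$ and the closure statement $\overline{W_d^{0,v}} \subset W_d$ established just above precisely control which rational equivalences on $\mathcal{E}_{P,Q}$ propagate from rational equivalences on open subschemes, so by the same countable-union argument the kernel of the restriction, on the $\mathbb{F}_l$-span of the $\tilde{\alpha}_{i,P,Q}$, is trivial outside a further countable union of proper Zariski closed subsets of $U$. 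Combined with the previous step, this gives $r$ independent $l$-torsion classes in $\Pic(\bcO_K)$ on a Zariski dense open subset, so $\dim_{\mathbb{F}_l}\Pic(\bcO_K)[l]\geq r$ generically.

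For constancy, as opposed to a one-sided inequality, I would couple the lower bound with the \'etale monodromy of the smooth fibration $\mathcal{E}_U \to U$ invoked in the introduction: the $l$-torsion of the smooth fibers forms a lisse $\mathbb{F}_l$-sheaf on $U$, so the rank of the specialization image is upper-semicontinuous, and together with the lower bound it must equal $r$ on a Zariski dense open subset. The main obstacle, in my view, is this second direction: Theorem~\ref{BH24} is tailored to preserve non-triviality under specialization and hence yields only a lower bound, while the matching upper bound requires the monodromy/continuity argument to rule out extra $l$-torsion classes from appearing generically in $\Pic(\bcO_K)$ beyond those coming from $A^1(\mathcal{E})[l]$.
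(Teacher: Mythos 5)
There is a genuine gap in the non-triviality half of your argument. Theorem~\ref{BH24} only asserts that $\bcZ_d$ is a \emph{countable union of Zariski closed subsets} of the Chow variety; it does not say these subsets are proper, and even if they were, the base here is (an open part of) $\Spec(\ZZ)\times\Spec(\ZZ)$, whose closed points form a countable set. A countable union of proper closed subsets can therefore exhaust every actual pair of primes $(P,Q)$, so "the dense complement $U\setminus Z$" need not contain any closed point at which you get a number field $K$ at all. Your claimed injectivity of $A^1(\mathcal{E})[l]\to \CH^1(\mathcal{E}_{P,Q})[l]$ at honest primes does not follow. The same objection applies to your second countable-union step, and there the situation is worse: the kernel of the restriction $\CH^1(\mathcal{E}_{P,Q})\to \Pic(\bcO_K)$ is generated by divisor classes supported on the complement of $\Spec(\bcO_K)$, and the closure statements $\overline{W_d^{0,v}}\subset W_d$ say nothing about that kernel; you assert its generic triviality on the span of the $\tilde{\alpha}_{i,P,Q}$ without any argument. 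The paper does not attempt either of these generic-point claims: it gets the constancy from the finite \'etale morphism $\bcZ_{d,V}\to V$, which yields a local system inside the Tate module and hence a subgroup of the class groups whose $l$-rank is constant along the family, and it gets the indispensable non-vanishing at actual members from an entirely different, explicit input, namely Soleng's Theorem 4.1 together with a Nagell--Lutz computation of the rational torsion of $y^2=x^3+1$ (which is $\ZZ_2\times\ZZ_4$), giving concrete torsion in the class groups of $\QQ(\sqrt{1-m^3})$ and, varying $n$, of $\QQ(\sqrt{n-m^3})$.

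Your monodromy remark for the upper bound is close in spirit to the paper's local-system argument and is the salvageable part of the proposal; but as written the proof still needs (i) a justification that the relevant loci miss actual prime pairs, which the countable-union formalism cannot provide over an arithmetic base, and (ii) a concrete non-triviality certificate at at least one member of the family, which is exactly the role Soleng's homomorphism plays in the paper and which your argument omits entirely.
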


\begin{proof} It follows from Theorem \ref{BH24} that $\bcZ_d$ is a countable union of Zariski closed subsets in a parameter scheme. Let  $$\bcZ_d=\cup_{i=0}^{\infty}\bcZ_d^i\;,$$
and consider the family $\bcZ_d^i\to U$ for each $i$. Over an open subset $V$ of $U$, the above map is an \'etale morphism and let it be dominant too by \cite{BH}. Then we have an \'etale morphism which is surjective from $\bcZ_{d,V}\to V$. Now considering the corresponding fibrations with base $\bar \QQ$, we have a finite \'etale morphism from $\bcZ_{d,V,\QQ}\to V_{\QQ}$, if we replace $\bcZ_{d,V,\QQ}$ by a smooth multi-section (it is a multi-hyperplane section) over $\bar\QQ$. Then the fiber over each scheme-theoretic point forms a subgroup of elements of order $n$ in the class group of $\mathcal{E}_{P,Q}$. Since the morphism is \'etale and finite, it gives a local system over $V$ which is a sub-local system in the Tate module $T_l(E_{P,Q,\QQ})$ and hence the rank of the subgroup in the class groups mentioned above varies in a family in this sense. That is, there exists a subgroup in each of the class groups such that it's $l$-rank remains constant. If we can prove that the $l$-rank is non-zero in a certain class group among the members of the family above, then it is non-zero over a Zariski open subset of $\Spec(\ZZ)\times \Spec(\ZZ)$. 

Now consider the elliptic curve, $$y^2=x^3+n$$ for a fixed positive integer $n$. Assume that $m$ is a positive integer such that $-m^3+n<0$, then  by \cite[Theorem 4.1]{So}, the imaginary quadratic field $\QQ(\sqrt{n-m^3})$ has an element of order $p$ by specializing the $p$-torsion  in the elliptic curve (defined over $\QQ$) to the class group of the imaginary quadratic field given above. Now let us calculate the torsion group defined over $\QQ$ of the elliptic curve $$y^2=x^3+1.$$ By Nagell-Lutz theorem, either $$y=0$$ or $$y^2|\Delta$$ where $\Delta=27$ the discriminant of the given elliptic curve. Then $$y^2=1,9$$ are the only possibilities or $y=0$. In the case $y=0$ we get that $x^3+1=0$ and hence $$x=-1, \frac{1\pm\sqrt{3}}{2}$$ is a root of this equation. Therefore there is a $2$-torsion $(-1,0)$ defined over rational numbers. 

In the other cases, we have $$x^3+1=1,$$
that is, $x=0$ and $y=\pm 1$, or 
$$x^3+1=9$$ and hence $x^3=8$ implying $x=2$, so $$(2,3), (2,-3).$$

Thus the torsion subgroup is 
$$\{(-1,0), (2,3), (2,-3), (0,1), (0,-1), (0,\frac{1\pm\sqrt{3}}{2}),\bcO\},$$ so it is a subgroup of order $8$ and it contains the cyclic group $\ZZ_2$ and it is 
$$\ZZ_2\times\ZZ_4.$$
Therefore, there is a $2$ torsion and a 4-torsion in the class group of the number fields $\QQ(\sqrt{1-m^3})$ for $m>1$. 

Now varying $n$, that is replacing $1$ by $n$ in the above equation we get the elliptic curve $$y^2=x^3+n.$$ Since the torsion subgroup varies in a family of imaginary quadratic fields, we have  a subgroup of order $8$ in the class group of $\QQ(\sqrt{n-m^3})$ by the technique of \cite{So}. Moreover by our technique the subgroup varies in a family of  imaginary quadratic fields $\QQ(\sqrt{n-m^3})$.

On the other hand, if we fix the constant $n=1$ and vary $m$ in  the above equation we get the family $\QQ(\sqrt{1-m^3})$ whose class group has a subgroup $\ZZ_2\times \ZZ_4$.
\end{proof}

\section{Proof of the main theorem \ref{thm1}}
Let $C$ be the curve closely embedded in $\mathcal{E}$ such that it is fibered over $\PR^1$ and each fiber consists of $p$-torsions of the smooth fiber $\mathcal{E}_b$ for a closed point $b\in \PR^1$. When we consider the fibers it gives us the family of torsion fields that attach the torsion points of order $p$ with $\QQ$. In this section, we are interested in the class group of this family of torsion fields. 

Consider a torsion point on $\Pic^0(C)$. Then we obtain a fixed spread $\bcC$ of the curve $C$, the Neron model $\Pic^0(\bcC)$ of the Picard variety, and the spread of $[\alpha]$, say $\widetilde{\alpha}$. Now consider a good prime $P$ in $\Spec{\ZZ}$ such that the specialization $\bcC_P$ is a smooth arithmetic scheme and it is the ring of integers of the torsion field corresponding to $\mathcal{E}_P$. Specializing $\widetilde{\alpha}$ in the fiber $\bcC_P$ we will have $\widetilde{\alpha}_P$ in the class group of the torsion field over the point $P$. Our aim is to prove that this element in the class group is nonzero for a suitable element $\alpha$. By an analog of Theorem \ref{thm2}, the $p$-rank remains constant in a family of torsion fields. Theorem \ref{thm1} can be rewritten in this configuration as follows:

\begin{theorem}\label{thm3}
Assume an element of order $n$ in the $p$-Selmer group of the Jacobian of $C$. Then it gives an $n$-torsion  in the class group of the torsion fields. 
\end{theorem}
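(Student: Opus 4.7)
The plan is to associate to the given Selmer class a $J[p]$-torsor, spread it over $\Spec(\ZZ)$ in the sense of Section 2, and then apply class field theory to the fiber at a good prime to produce an $n$-torsion element in the class group of the torsion field. First, I would recall that an element $\xi \in \mathrm{Sel}_p(J)$ is a class in $H^1(\QQ, J[p])$ whose image in $H^1(\QQ_v, J)$ vanishes at every place $v$, as read off from the Kummer exact sequence
\begin{equation*}
0 \to J(\QQ)/p J(\QQ) \to H^1(\QQ, J[p]) \to H^1(\QQ, J)[p] \to 0.
\end{equation*}
Such a $\xi$ corresponds to a $J[p]$-torsor $T_\xi$ over $\QQ$ whose base change to $\QQ_v$ acquires a rational point for every place $v$, and the hypothesis that $\xi$ has order $n$ translates to the cocycle class of $T_\xi$ being of exact order $n$.

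Next, I would mimic the spreading procedure from Section 2. Let $\bcJ$ denote the N\'eron model of $J$ over $\Spec(\ZZ)$, and extend $T_\xi$ to a scheme $\widetilde{T}_\xi$ over an open subset $U \subset \Spec(\ZZ)$ on which it is a $\bcJ[p]$-torsor. Since $\bcJ[p] \to U$ becomes finite \'etale after possibly shrinking $U$, the morphism $\widetilde{T}_\xi \to U$ is also finite \'etale. Specializing at a good prime $P \in U$ for which $\bcC_P$ is smooth and contains the ring of integers $\bcO_K$ of the torsion field $K$ as a Zariski-open subscheme, the restriction of $\widetilde{T}_\xi$ to $\Spec(\bcO_K)$ is an unramified finite abelian cover. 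By class field theory, such covers of exponent dividing $n$ are classified by $\Hom(\mathrm{Cl}(K), \ZZ/n\ZZ)$, so this cover produces the desired element of $\mathrm{Cl}(K)[n]$.

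The hard step will be proving that the class thus obtained has order exactly $n$, rather than a proper divisor, since specialization can in principle collapse cohomology. For this I would follow the strategy of the proof of Theorem \ref{thm2}: Theorem \ref{BH24} implies that the sub-locus of the parameter scheme over which the image in $\mathrm{Cl}(K)$ has order strictly smaller than $n$ is a countable union of Zariski-closed subsets. Combined with the \'etale monodromy and local-system argument used in Theorem \ref{thm2}, the constancy of the $p$-rank in the family would then guarantee that generically the image has the full order $n$, provided a single good prime $P$ realizes this order. Producing such a base-point would be done by an explicit construction analogous to the curve $y^2 = x^3 + 1$ of Section 2, from which nontriviality propagates by the Chow-variety / monodromy framework.
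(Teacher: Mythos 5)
Your route is genuinely different from the paper's, but it has a real gap at its central step. You assert that the spread-out torsor $\widetilde{T}_\xi$ restricts to an \emph{everywhere unramified} finite abelian cover of $\Spec(\bcO_K)$, and that is exactly what the Selmer condition does not provide. The local conditions defining the $p$-Selmer group at places $v\mid p$ and at places of bad reduction say only that the class lies in the image of $J(\QQ_v)/pJ(\QQ_v)$ under the Kummer map; they do not force the $J[p]$-torsor to be unramified at those places. Moreover, your own reduction -- shrinking $U$ so that $\bcJ[p]\to U$ is finite \'etale -- discards precisely the primes above $p$ and the bad primes, so the cover of $\Spec(\bcO_K)$ you obtain is a priori unramified only away from those primes. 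Class field theory then classifies it by a ray class group (restricted ramification), not by $\Hom(\mathrm{Cl}(K),\ZZ/n\ZZ)$; without an additional argument controlling ramification at $p$ and at the bad fibers (a flat prolongation of the torsor, a ``peu ramifi\'ee''-type analysis, or showing that $K$ already absorbs that ramification, as in the standard comparisons of Selmer groups with class groups of division fields), you do not land in the class group at all. Two smaller points: an element of the $p$-Selmer group has order dividing $p$, so the ``order $n$'' hypothesis needs $n\mid p$; and what your construction would produce is an element of $\Hom(\mathrm{Cl}(K),J[p])$, i.e.\ a quotient of the class group, so the order of the resulting torsion class still has to be tracked separately.

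For comparison, the paper avoids torsors and the Hilbert class field entirely: it uses the Galois-equivariant specialization homomorphism $\Pic^0(C)\to \cl(\bcC_P)$ and functoriality of Galois cohomology, $H^1(G,\Pic^0(C))\to H^1(G,\cl(\bcC_P))$, together with the Kummer-type map $\Pic^0(C)(\QQ)/n\Pic^0(C)(\QQ)\to \cl(\bcC_P)^G$, and finally composes with $E_P\to \Alb(\mathcal E)\cong\Pic^0(\mathcal E)\to\Pic^0(C)$ to get a homomorphism from the Selmer group to $\cl(\bcC_P)$. Since the map is induced by a morphism of Galois modules, no ramification analysis is needed there, and the nontriviality/order question is deferred to the earlier constancy-of-rank results. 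Your final step is also not yet a proof: Theorem \ref{BH24} concerns the locus where divisor classes on the fibers become $n$-torsion under rational equivalence, and it does not by itself control the order of the specialized cohomology class arising from your cover, so the ``hard step'' would need an argument beyond the one you sketch.
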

 
\begin{proof} Let us consider the push-forward map from $$\Pic^0(\mathcal E)\to \Pic^0(C)$$ Note that both are Galois modules with the action of the Galois group of $\bar\QQ/\QQ$. The Galois action on $\Pic^0(C)$ is compatible with the Galois action on the torsion fields by  Fulton's intersection theory about specialization homomorphisms (cf. \cite{Ful}). Thus there exists a Galois module structure on $cl(\bcC_P)$ (class group of $\bcC_P$ as discussed earlier). So we have the homomorphism by our construction (as mentioend in the introduction) from $$\Pic^0(C)\to cl(\bcC_P)$$ for a general point $P$. Further taking the Galois action into account we have a functorial homomorphism $$H^1(G, \Pic^0(C))\to H^1(G,cl(\bcC_P)).$$
 
 Similarly we have a functorial homomorphism from the $n$-Selmer group of $\Pic^0(C)$ to the $n$-selmer group of the class group of $\bcC_P$. Since the later group is just $H^1(G,cl(\bcC_P))$, we have a homomorphism from 
$$\Pic^0(C)(\QQ)/n\Pic^0(C)(\QQ)\to cl(\bcC_P)^G/n cl(\bcC_P)^G.$$
Taking $n$ large we have a map from 
$$\Pic^0(C)(\QQ)/n\Pic^0(C)(\QQ)\to cl(\bcC_P)^G\subset \cl(\bcC_P).$$
Further composing with the map $$E_P\to Alb(\mathcal E)\cong\Pic^0(\mathcal E)\to \Pic^0(C)$$
we actually have a homomorphism from the $n$-Selmer group of $E_P$ to the $\cl(\bcC_P)$.
\end{proof}

\subsection{A toy computation}
Let us consider the previous example $y^2=x^3+t$ and consider the curve $C$ closely embedded in $\mathcal{E}$ which parametrizes the $2$ torsions in the fiber. For example consider the smooth fiber $$ y^2 = x^3 + 17$$ and the $2$-torsion field for this curve is isomorphic to $\QQ({17}^{1/3})$. Now, by the Nagell-Lutz Theorem, we have $$ y^2|\Delta$$
where $\Delta$ is the discriminant of the above elliptic curve. Here $$\Delta=27.17^3,$$ so we have $$y^2=1,9, 17^2, 17^2.9.$$ For the above solutions the values of $x$ are given by $$x^3=-8$$ so $$x=-2, 1\pm \sqrt{3} $$ and thus  the torsion points are $$\{\bcO, (-2,\pm 3), (1\pm \sqrt{3}, \pm 3)\},$$ which are defined over the integers and it is isomorphic to $$\ZZ_3\times \ZZ_3$$ or $$\ZZ_9.$$

Note that this subgroup of integer torsion points is embedded in $$E(\QQ)/2E(\QQ)$$ which is also embedded in the 2-Selmer group of $E$, $E$ is given by $$y^2=x^3+17\;.$$

Let us denote this subgroup by $H$ and by the previous technique the subgroup $H$ is specialized to the class group of the number field $\QQ({-17}^{1/3})$. So, first of all, the Selmer group of the elliptic curve $E: y^2=x^3+17$ has a subgroup of order $9$. This subgroup, when specialized to the class group of cubic fields, may give elements of order $3$ or of order $9$ in the corresponding cubic field. 

\subsection*{Acknowledgements} The first two authors would like to thank the AP of SRM University for support and providing a congenial atmosphere to carry out this research. The third author is supported by the SERB MATRICS grant (No. MTR/2021/000762) and the ANRF(SERB) CRG grant (No. CRG/2023/007323), Govt. of India. The authors are grateful to the anonymous referees for careful reading of the paper and for valuable comments/suggestions which have helped improving the presentation immensely.

%\subsection*{Data availability statement} This manuscript has no associated data.

%\subsection*{Conflict of Interest} The authors declare that there is no conflict of interest.

\end{document}